
\documentclass{amsart} 
\usepackage{amsthm, amsfonts, amssymb, amsmath}

\newtheorem{theorem}{Theorem}[section]
\newtheorem{lemma}[theorem]{Lemma}
\newtheorem{corollary}[theorem]{Corollary}

\newtheorem{remark}[theorem]{Remark}

\theoremstyle{definition}
\newtheorem{definition}[theorem]{Definition}
\newtheorem{example}[theorem]{Example}
\newtheorem{remark/example}[theorem]{Remark/Example}

 \let\oldlabel=\label
\def\prellabel{\marginparsep=1em\marginparwidth=44pt
   \def\label##1{\oldlabel{##1}\ifmmode\else\ifinner\else
    \marginpar{{\footnotesize\ \\ \tt
                 ##1}}\fi\fi}}


\numberwithin{equation}{section}

\def\AA{{\bf A }}
\def\CC{{\bf C }}

\newcommand{\Lt}{\operatorname{Lt}}
\newcommand{\Lc}{\operatorname{Lc}}

\newcommand{\GCD}{\operatorname{GCD}}

\newcommand{\Lex}{\operatorname{Lex}}
\newcommand{\chara}{\operatorname{char}}

\newcommand{\rank}{\operatorname{rank}}

\newcommand{\Hilb}{\operatorname{Hilb}}

\numberwithin{equation}{section}

\begin{document}

\title{Canonical Hilbert-Burch matrices for ideals of $k[x,y]$ }
\author{ Aldo Conca, Giuseppe Valla}
\address{Dipartimento di Matematica, Universit\'a di Genova, Via Dodecaneso 35, I-16146 Genova, Italy }
\email{conca@dima.unige.it, valla@dima.unige.it }
\date{}
 
\begin{abstract} 
An Artinian ideal $I$ of $k[x,y]$ has many Hilbert-Burch matrices. 
We show that there is a canonical choice. As an application, we determine the dimension of certain affine Gr\"obner cells and their Betti strata recovering results of Ellingsrud and Str\o mme, G\"ottsche and Iarrobino. 
\end{abstract}
 
\maketitle
 \section{Introduction} 
 Let $k$ be a field. Let $R$ be the polynomial ring $k[x_1,\dots,x_n]$ and $\tau$ be a term order on $R$.
Given a non-zero $f\in R$ we denote by $\Lt_\tau(f)$ the largest term with respect to $\tau$ appearing in $f$. For an ideal $I$ of $R$ we denote by $\Lt_\tau(I)$ the (monomial) ideal generated by $\Lt_\tau(f)$ with $f\in I\setminus\{0\}$. Let $E$ be a monomial ideal of $R$. 
Consider the set $V(E)$ of the homogeneous ideals $I$ of $R$ such that $\Lt_\tau(I)=E$. The set $V(E)$ has a natural structure of affine variety. Namely, given $I$ in $V(E)$, we can consider $I$ as a point in an affine space $\AA^N$ with coordinates given by the coefficients of the non-leading terms in the reduced Gr\"obner basis of $I$, see Section \ref{param} for details. The equations defining (at least set-theoretically) $V(E)$ can be obtained form Buchberger's Gr\"obner basis criterion. Provided $\dim_k R/E$ is finite, one can give the structure of affine variety also to the set $V_0(E)$ of the ideals $I$ (homogeneous or not) such that $\Lt_\tau(I)=E$. 
 
These varieties play important roles in many contexts such as, for example, the study of various types of Hilbert schemes and the problem of deforming non-radical to radical or prime ideals, see \cite{AS, Br,CRV, ES1,ES2,Go1,Go2,I1,I2,IY, MS}. 

Many of the equations defining $V(E)$ or $V_0(E)$ contain parameters that appear in degree $1$ and that can be eliminated. It happens quite often that, after getting rid of the superfluous parameters, one is left with no equations, that is, the variety is an affine space. But, in general, $V(E)$ can be reducible and it can have irreducible components that are not affine spaces, see the examples \ref{exe1},\ref{exe2} and \ref{exe3}.
 
On the other hand, for $n=2$ and $d=\dim_k R/E<\infty$, it is known that $V_0(E)$ and $V(E)$ are affine spaces. This is a consequence of general results of Bialynicki-Birula \cite{BB1,BB2} on smooth varieties with $k^*$-actions. Here it is important to note that $V_0(E)$ coincides with the set of points of the Hilbert scheme $\Hilb^d(\AA^2)$ that degenerate to $E$ under a suitable $k^*$-action associated to a weight vector representing the term order on monomials of degree $\leq d+1$. By the analogy with Schubert cells for Grassmannians, we name $V_0(E)$ and $V(E)$ Gr\"obner cells. 

Our goal here is to show that for $n=2$ and $\tau$ the lexicographic order induced by $x>y,$ both $V(E)$ and $V_0(E)$ can be described as affine spaces in a very explicit way, see \ref{realmain}. To achieve this goal we identify canonical Hilbert-Burch matrices of the ideals involved. The main point is to introduce (redundant) systems of generators for the ideals in $V_0(E)$, that, instead of being themselves ``simple", have ``simple" syzygies.

We can then easily deduce formulas for the dimensions of $V(E)$ and $V_0(E)$ and of two other subvarieties of $V_0(E)$, see \ref{main}. Dimension formulas for these varieties were originally obtained in \cite{ES1,ES2, I1, Go2, IY}. 
In Section \ref{betti} we reprove and generalize some results of Iarrobino \cite{I2} concerning the Betti strata of $V(E)$. 

For standard facts on Gr\"obner bases we refer the reader to \cite{KR} or \cite{E}. The results of this paper were discovered, suggested and double-checked by extensive computer algebra experiments performed with CoCoA \cite{Co}.

\section{$V(E)$ as an affine variety} 
\label{param}

With the notations introduced above, we first recall how $V(E)$ and $V_0(E)$ can be given the structure of affine varieties. For every minimal monomial generator $m$ of $E$ consider the polynomial 
$$f_m=m-\sum \lambda(m,m') m'$$
 where the sum is extended the monomials $m'\not\in E$ such that $\deg m=\deg m'$ and $m'<m$ with respect to $\tau$. 
 Denote by $N$ the total number of the parameters $\lambda(m,m')$. The property of being a Gr\"obner basis for the $f_m$'s is turned into the vanishing of polynomials, say $B_1,\dots,B_r$, on the parameters $\lambda(m,m')$. Since an ideal has a unique reduced Gr\"obner basis, the points of the affine variety of $\AA^N$ defined by the vanishing of the $B_i$ are in bijection with the elements of $V(E)$. The polynomials $B_i$ can be explicitly computed through the Buchberger's criterion for Gr\"obner basis. There are many degrees of freedom in the application of the Buchberger's criterion (e.g. one can use all the S-pairs or carefully choosen subsets of them, the reduction process can be performed in various ways, and so on). So the actual nature of the polynomials $B_i$ depend on these choices but, of course, not the variety that they define. 
 
Similarly, if $\dim_k R/E$ is finite, one can give the structure of affine variety to $V_0(E)$ by dropping the assumption that $\deg m=\deg m'$ in the definition of $f_m$. 

As said in the Introduction, the varieties $V(E)$ and $V_0(E)$ quite often are affine spaces. Roughly speaking, what happens is the following. Say $m,n$ are monomial generators of $E$, $m'<m$ and $t=m'n/\GCD(m,n)$ satisfies $t\not\in E$. Then the coefficient of $t$ 
in the $S$-polynomial associated to $f_m$ and $f_n$ is just $\lambda(m,m')$ or $\lambda(m,m')-\lambda(n,n')$ depending on whether 
there exists $n'<n$ such that $t=n'm/\GCD(m,n)$. Performing the reduction procedure, $\lambda(m,m')$ cannot be cancelled because at each iteration the degree of the coefficients involved increases by $1$. At the end of the reduction procedure, the coefficient of $t$ in the polynomial we are left with must vanish. Therefore we have equations of the form: 
\begin{equation} 
\label{elimina}
\lambda(m,m')+B=0 \quad \mbox{ or } \quad \lambda(m,m')-\lambda(n,n')+B=0 
\end{equation} 
where $B$ is a polynomial in the $\lambda(*,*)$ not involving monomials of degree $1$. 
Of course if $B$ does not involve $\lambda(m,m')$ at all then we can use \ref{elimina} to get rid of the parameter $\lambda(m,m')$ from the equations. This elimination process can be iterated. In many cases, at the end of the elimination process, the equations vanish completely, and this shows that the associated variety is an affine space. We have implemented this rough algorithm in CoCoA \cite{Co}. We have tested, for instance, that for $\tau=\Lex$, $n=3$ and $E$ any ideal generated by monomials of degree $3$ then $V(E)$ is an affine space. 

The following examples show that in general the variety $V(E)$ has a more complicated structure. For simplicity, the coordinates of the ambient affine spaces $\lambda(m,m')$ are denoted by $a_i$. 

 \begin{example}\label{exe1} 
 Set $n=3$, $E=(x_3^4, x_2^4, x_1x_2^2x_3, x_1^3x_3)$ and $\tau=\Lex$. Then $V(E)$ is a subvariety of $\AA^{17}$, the inclusion being given by the parametrization:

\begin{equation*}
\begin{array}{rl}
x_1^3x_3&-x_1^2x_2^2a_1-x_1^2x_2x_3a_2-x_1^2x_3^2a_3-x_1x_2^3a_4-x_1x_2x_3^2a_5-x_1x_3^3a_6\\
&-x_2^3x_3a_7-x_2^2x_3^2a_8-x_2x_3^3a_9,\\ 
x_1x_2^2x_3&-x_1x_2x_3^2a_{10}-x_1x_3^3a_{11}-x_2^3x_3a_{12}-x_2^2x_3^2a_{13}-x_2x_3^3a_{14},\\
x_2^4 &-x_2^3x_3a_{15}-x_2^2x_3^2a_{16}-x_2x_3^3a_{17},\\
x_3^4.& 
\end{array}
\end{equation*} 
Buchberger's criterion gives $3$ equations. Two of them can be written as: 
 \begin{equation*} 
 \begin{array}{ll}
 a_{14}=-a_{10}^2a_{12}-a_{10}a_{13}-a_{11}a_{12} ,\\
 a_9= 2 a_1a_{10}^2a_{12}^2a_{15}+ \mbox{ other $46$ terms in the $a_i$'s not involving $a_9$ and $a_{14}$}.
\end{array}
\end{equation*} 
Setting $b_{17}= a_{10}^3-a_{10}^2a_{15} + 2 a_{10}a_{11}-a_{10}a_{16}-a_{11}a_{15}-a_{17}$, the third equation is 
$a_1b_{17} =0.$
Hence $V(E)$ has two irreducible components both isomorphic to $\AA^{14}$. 
\end{example} 

 \begin{example} \label{exe2} 
Let $n=4$, $E=(x_{4}^2, x_{2}x_{4}, x_{2}^2, x_{1}x_{4})$ and $\tau=\Lex$. Then $V(E)$ is a subvariety of $\AA^{8}$, the inclusion being given by the parametrization: 
\begin{equation*}
\begin{array}{rl}
x_{4}^2,&\\
x_{2}x_{4} & - x_{3}^2a_{1} - x_{3}x_{4}a_{2} ,\\ 
x_{2}^2 &- x_{2}x_{3}a_{3} - x_{3}^2a_{4} - x_{3}x_{4}a_{5},\\ 
x_{1}x_{4} & - x_{2}x_{3}a_{6} - x_{3}^2a_{7} - x_{3}x_{4}a_{8}.
\end{array}
\end{equation*}
The parameters $a_{1}, a_{7}, a_{4}$ can be eliminated, so that $V(E)$ is indeed contained in $\AA^5$.
 After renaming $b_{3}=2a_{2} - a_{3}$ the defining ideal of $V(E)$ in $\AA^5$ takes the form $b_{3}a_{6}, a_{5}a_{6}$. Hence $V(E)$ has two irreducible components, one isomorphic to $\AA^3$ and the other to $\AA^4$. 
 \end{example}

 \begin{example} \label{exe3} 
Let $n=4$, $E=(x_{4}^2, x_{2}x_{4}, x_{1}x_{4}, x_{1}x_{2}, x_{1}^2)$ and $\tau=\Lex$.
Then $V(E)$ is a subvariety of $\AA^{16}$, the inclusion being given by the parametrization: 
$$
\begin{array}{rl}
x_{4}^2, & \\
x_{2}x_{4} & - x_{3}^2a_{1} - x_{3}x_{4}a_{2},\\ 
x_{1}x_{4} & - x_{2}^2a_{3} - x_{2}x_{3}a_{4} - x_{3}^2a_{5} - x_{3}x_{4}a_{6},\\
x_{1}x_{2} & - x_{1}x_{3}a_{7} - x_{2}^2a_{8} - x_{2}x_{3}a_{9} - x_{3}^2a_{10} - x_{3}x_{4}a_{11},\\
x_{1}^2 & - x_{1}x_{3}a_{12} - x_{2}^2a_{13} - x_{2}x_{3}a_{14} - x_{3}^2a_{15} - x_{3}x_{4}a_{16}.
\end{array} 
$$
The parameters 
$$a_{1}, a_{3}, a_{4}, a_{5}, a_{13}, a_{10}, a_{14}, a_{15}$$ 
can be eliminated, so that $V(E)$ is indeed contained in $\AA^8$. After renaming 
$$b_{9}=a_{2}a_{8} + a_{7}a_{8} - a_{6} + a_{9}, \quad b_{12}=2a_{6} + b_{9} - a_{12}, \quad b_{7}=a_{2} - a_{7}$$ the defining ideal of $V(E)$ in $\AA^8$ takes the form 
$$(b_{7}b_{9},b_{9}b_{12}, a_{11}b_{12} - b_{7}a_{16}).$$ Hence $V(E)$ has two components, one is isomorphic to $\AA^6$ and the other is a quadric hypersurface of rank $4$ in $\AA^7$. 
\end{example} 

 \section{Ideals in $k[x,y]$} 
\label{kxy}

Form now on, let $k$ be a field, $R=k[x,y]$ be the polynomial ring over $k$. We equip $R$ with the lexicographic term order $>$ induced by $x>y$. 

Given a monomial ideal $E\subset R$ with $\dim_k R/E<\infty$ we want to describe the set of ideals: 

$$V_0(E)=\{ I \mbox{ such that } \Lt(I)=E\}$$ and its subsets
$$V_1(E)=\{ I \mbox{ such that } \Lt(I)=E \mbox{ and } y\in \sqrt{I})\},$$ 
$$V_2(E)=\{ I \mbox{ such that } \Lt(I)=E \mbox{ and } \sqrt{I}=(x,y)\},$$
$$V(E)=V_3(E)=\{ I \mbox{ such that } \Lt(I)=E \mbox{ and } I \mbox{ is homogeneous} \}.$$

Our goal is to prove Theorem \ref{realmain}. As a corollary we have: 

\begin{corollary} \label{main}
The set $V_0(E)$ is an affine space. The subsets $V_1(E), V_2(E)$ and $V_3(E)$ are also affine spaces, indeed coordinate subspaces of $V_0(E)$. Furthermore

$$\dim V_i(E)=\left\{
\begin{array}{ll}
\dim_k R/E+\min\{ j : y^j\in E\} & \mbox{ if } i=0,\\
\dim_k R/E  & \mbox{ if } i=1,\\
\dim_k R/E-\min\{ j : x^j\in E\} & \mbox{ if } i=2,\\
\# \mathcal{S}(E) & \mbox{ if } i=3.
\end{array}
\right.
$$
where $\mathcal{S}(E)$ is a set described below and $\# \mathcal{S}(E)$ denotes its cardinality. 
\end{corollary}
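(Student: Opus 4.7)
The strategy is to deduce the corollary directly from Theorem~\ref{realmain}, which (as announced in the Introduction) produces canonical Hilbert--Burch matrices for the ideals in $V_0(E)$ and thereby identifies $V_0(E)$ with an affine space whose coordinates are the coefficients of the sub-leading terms of the entries of these matrices. Under this identification, the three successive conditions cutting out $V_1(E)$, $V_2(E)$ and $V_3(E)=V(E)$ should each translate into the vanishing of a cleanly identifiable block of these coefficients, so all four varieties emerge as coordinate subspaces of $V_0(E)$.

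The dimension count then reduces to combinatorics on the staircase of $E$. Write the minimal generators of $E$ as $m_i=x^{a_i}y^{b_i}$ with $a_1>\cdots>a_n=0$ and $0=b_1<\cdots<b_n$; the canonical Hilbert--Burch matrix is the $(n-1)\times n$ bidiagonal matrix whose diagonal entries have leading terms $y^{b_{i+1}-b_i}$ and whose sub-diagonal entries have leading terms $x^{a_i-a_{i+1}}$. For each entry, Theorem~\ref{realmain} prescribes a list of admissible sub-leading monomials, and summing their cardinalities across all entries should produce $\dim_k R/E+b_n$, where $b_n=\min\{j:y^j\in E\}$; this gives the formula for $\dim V_0(E)$.

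For the subsets, the condition $y\in\sqrt{I}$ defining $V_1(E)$ means $I$ is supported on the $x$-axis and should correspond exactly to killing the $b_n$ ``off-axis'' sub-leading parameters, yielding $\dim V_1(E)=\dim_k R/E$. The stronger condition $\sqrt{I}=(x,y)$ defining $V_2(E)$ forces the support to be at the origin and removes a further block of $\min\{j:x^j\in E\}$ parameters associated with motion along the $x$-axis. Finally, homogeneity of $I$ forces every entry of the canonical Hilbert--Burch matrix to be homogeneous, so only sub-leading monomials of the same total degree as the leading term survive; collecting these across all entries gives, by definition, the set $\mathcal{S}(E)$. The main obstacle is bookkeeping: matching each of the conditions ($y\in\sqrt{I}$, $\sqrt{I}=(x,y)$, homogeneity) to the correct coordinate subspace in the canonical parametrization, and checking that the parameter counts on the staircase give precisely the four stated formulas. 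Granted Theorem~\ref{realmain}, all of this is a finite combinatorial verification.
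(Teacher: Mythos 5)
Your overall strategy is the same as the paper's: deduce the corollary from Theorem~\ref{realmain}, read off that $V_0(E)\cong T_0(E)$ and $V_i(E)\cong T_i(E)$ are affine (in fact coordinate) subspaces, and then count the free parameters in the matrices $N$ allowed by Definition~\ref{defi}. The paper itself treats the corollary as an immediate consequence, so the spirit of your proposal is fine.

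However, there is a genuine error in your description of the canonical Hilbert--Burch matrix, and it is not a cosmetic one. You write the matrix as the $(n-1)\times n$ bidiagonal matrix built from the \emph{minimal} generators $x^{a_i}y^{b_i}$, with subdiagonal leading terms $x^{a_i-a_{i+1}}$ of varying degree. That is the naive Hilbert--Burch matrix, and it is \emph{not} the one in Theorem~\ref{realmain}. The paper's key device --- announced in the Introduction and carried out in Section~\ref{kxy} --- is to use the redundant system of generators $x^{t-i}y^{m_i}$, $i=0,\dots,t$, which gives a $(t+1)\times t$ matrix $M_0(E)$ with \emph{every} subdiagonal entry equal to $-x$ and diagonal entries $y^{d_i}$ (with $d_i$ allowed to be $0$). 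It is precisely this choice that makes the perturbation $N$ consist of polynomials in $y$ alone with uniform degree bounds $\deg n_{ij}<d_j$, which is what makes the parameter count so clean: one checks $\sum_{j=1}^{t}(t+2-j)d_j=\dim_kR/E+m_t$, then subtracting the $\sum d_i=m_t$ diagonal constraints for $T_1$, the $t$ constant-term constraints for $T_2$, and finally reducing to the homogeneous entries indexed by $\mathcal{S}(E)$ for $T_3$. With your matrix, the entries and the degree bookkeeping are different, and the counts you sketch would not land on the four stated formulas. You also leave the actual count ``to a finite combinatorial verification,'' but given the wrong matrix that verification cannot succeed as written; you need to work with $M_0(E)$ and the sets $T_i(E)$ exactly as given in Definition~\ref{defi}.
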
 

The dimension formulas for $V_0(E),V_1(E)$ and $V_2(E)$ have been proved originally in \cite{ES1,ES2}. 
A dimension formula for $V_3(E)$ appears in \cite{I1,IY} for lex-segments $E$ and in \cite{Go1} for general $E$ . 

To prove \ref{main} one could try no analyze the equations coming from Buchberger's criterion. But this turns out to be quite difficult. Instead we parametrize the syzygies and identify canonical Hilbert-Burch matrices. 

We introduce a piece of notation. Given a monomial ideal $E$ such that $\dim_kR/E$ is finite, we set $t=min\{j:x^j\in E\}$, $m_0=0,$ and for every $1\le i\le t$ $m_i=min\{j:x^{t-i}y^j\in E\}$. It is clear that
$m_0=0<m_1\leq m_2\leq \dots \leq m_t$, 
$$E=(x^t,x^{t-1}y^{m_1},\dots,xy^{m_{t-1}},y^{m_t})$$ and 
 $\dim_k R/E=\sum_{i=0}^t m_i$. These generators of $E$ are not minimal in general. They minimally generate $E$ if and only if $m_0<m_1<m_2<\dots <m_t$, that is, $E$ is a lex-segment ideal.
By construction, the correspondence 
$$E \leftrightarrow (m_0,\dots, m_t)$$
 is a bijection between monomial ideals of $R$ with radical equal to $(x,y)$ and sequences of integers $0=m_0<m_1\leq m_2\leq \dots \leq m_t$. 
 
Given $E$ or, equivalently $(m_0,\dots, m_t)$, we set 
$$d_i=m_i-m_{i-1}$$
 for $i=1,\dots,t$. Here $d_1>0$ and $d_i\geq 0$ for every $i=2,\dots,t$. Clearly, $E$ can be as well described via the vector $(d_1,\dots,d_t)$. Furthermore, the lex-segment correspond exactly to the vectors with $d_i>0$ for $i=1,\dots,t$. 

The matrix 
$$M_0(E)=\begin{pmatrix}
y^{d_1}&0 & 0& \cdots & 0& 0\\
-x& y^{d_2}& 0& \cdots&0 & 0 \\
0& -x& y^{d_3}& \cdots &0&0 \\ 
\vdots&\vdots & \ddots& \ddots& \vdots& \vdots\\
 0&0 &0 & -x&y^{d_{t-1}} \\
 0& 0&0 & 0&-x&y^{d_t}\\
 0& 0&0 &0 &0 &-x
 \end{pmatrix}$$
 has size $(t+1)\times t$ and is a Hilbert-Burch matrix of $E$ in the sense that the (signed) $t$-minors of $M_0(E)$ are the monomials $x^{t-i}y^{m_i}$ and the columns generate their syzygy module. 
 
The matrix $M_0(E)$ represents a map from $F_1=\oplus_{i=1}^t R(-t+i-m_i-1)$ to $F_0=\oplus_{i=1}^{t+1} R(-t+i-1-m_{i-1})$. It is useful to consider also the corresponding degree matrix $U(E)=(u_{ij})$. The entries of $U(E)$ are the degrees of the (homogeneous) entries of every matrix representing a map of degree $0$ from $F_1$ to $F_0$. We have 
 
 \begin{equation}\label{uij}
 u_{ij}=m_j-m_{i-1}+i-j \mbox{ for } i=1,\dots,t+1 \mbox{ and } j=1,\dots,t 
 \end{equation}
 Notice that $u_{ii}=m_i-m_{i-1}=d_i$ and $u_{i+1,i}=1$ for every $i=1,\dots,t$. Define: 
 
 $$\mathcal{S}(E)=\{ (i,j) : 1\leq j<i\leq t+1 \mbox{ and } 0\leq u_{ij}<d_j\}$$
 
 \begin{definition}
 \label{defi} 
Let $T_0(E)$ be the set of $(t+1)\times t$ matrices $N=(n_{i,j})$ where $$
 n_{i,j}=\left\{ \begin{array}{ll}
 0 & \mbox{ if } i<j \\
\mbox{a polynomial in k[y] of degree } <d_j & \mbox{ if } i\geq j
\end{array} 
\right.
$$
Further consider the following conditions: 
\begin{itemize} 
\item[(1)] $n_{i,i}=0$ for every $i=1,\dots,t$. 
\item[(2)] For every $j$ such that $d_j>0$ the polynomial $n_{i,j}$ has no constant term for every $i=j+1,\dots,k+1$ where $k=\max\{ v: j\leq v\leq t \mbox{ and } m_v=m_j \}$. 
\item[(3)] $$
 n_{i,j}=\left\{ \begin{array}{ll}
 0 & \mbox{ if } (i,j)\notin \mathcal{S}(E)\\
p_{ij}y^{u_{ij}} &\mbox{ if } (i,j)\in \mathcal{S}(E)
\end{array} 
\right.
$$ 
 \end{itemize} with $p_{ij}\in k.$ 
 Accordingly we define 
$$
\begin{array}{ll} 
T_1(E)=\{ N \in T_0(E) : N \mbox{ satisfies } (1) \} \\ \\ 
T_2(E)=\{ N \in T_0(E) : N \mbox{ satisfies } (1) \mbox{ and } (2) \} \\ \\ 
T_3(E)=\{ N \in T_0(E) : N \mbox{ satisfies } (3) \} 
\end{array} 
$$
\end{definition}

\begin{theorem}
\label{realmain}
 For every monomial ideal $E$ the map $\phi: T_0(E) \to V_0(E)$ defined by sending $N\in T_0(E)$ to the ideal of $t$-minors of the matrix $M_0(E)+N$ is a bijection. Furthermore, the restriction of $\phi$ induces bijections between $T_i(E)$ and $V_i(E)$ for $i=1,2,3$. 
\end{theorem}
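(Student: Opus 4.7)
The strategy is to (a) compute the signed $t$-minors of $M:=M_0(E)+N$ explicitly enough to see that $\Lt(I)=E$, where $I=\phi(N)$; (b) use the uniqueness of the resulting generators inside $I$ to deduce injectivity and surjectivity simultaneously; (c) translate the three conditions cutting out $V_1,V_2,V_3$ into explicit vanishing patterns on the entries of $N$.

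For (a), every entry of $M$ has the form $\alpha_{kj}x+\beta_{kj}$ with $\alpha_{kj}\in\{0,-1\}$ and $\beta_{kj}\in k[y]$, the only entry carrying an $x$ being the subdiagonal $-x+n_{j+1,j}$ and the only entry whose $y$-degree reaches $d_j$ being the diagonal $y^{d_j}+n_{jj}$. A block computation extending the baseline identity $\det(M_0(E)_{[i+1]})=(-1)^{t-i}x^{t-i}y^{m_i}$ (where $M_{[i+1]}$ is the matrix with row $i+1$ deleted) then shows that the signed $t$-minor $f_i$ equals $x^{t-i}y^{m_i}$ modulo strictly lex-lower terms, and that every monomial $x^ay^b$ appearing in $f_i$ satisfies $a\le t-i$ with $b<m_{t-a}$ whenever $a<t-i$; in particular the non-leading support of $f_i$ lies outside $E$. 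The Hilbert-Burch theorem applies because the ideal of $t$-minors has codimension $2$ (as $\dim_kR/E<\infty$ forces the same for $R/I$), so the columns of $M$ generate the syzygies of $(f_0,\dots,f_t)$. The $j$-th column reads $y^{d_j}f_{j-1}-xf_j+\sum_{k\ne j-1,j}n_{kj}f_{k-1}=0$; the degree bounds defining $T_0(E)$ ensure each summand on the right has leading term strictly below $\operatorname{lcm}(L_{j-1},L_j)$, so $S(f_{j-1},f_j)$ reduces to $0$, and the chain criterion (via intermediate $f_k$'s) handles the non-consecutive S-pairs. Buchberger's criterion then gives $\Lt(I)=E$.

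For (b), the $f_i$ are characterised by $I$ alone as the unique elements of $I$ with $\Lt(f_i)=L_i:=x^{t-i}y^{m_i}$ and tails supported outside $E$: two such candidates would differ by a nonzero element of $I$ with leading term outside $E$, contradicting $\Lt(I)=E$. Injectivity now amounts to reading the entries of $N$ off the coefficients of specific monomials in the explicit determinant formula, by descending induction on the lex order. For surjectivity I start with $I\in V_0(E)$, pick canonical generators $g_0,\dots,g_t$ (the reduced Gr\"obner basis, completed by canonically chosen multiples at the redundant positions $d_j=0$), lift each Koszul relation $y^{d_j}L_{j-1}-xL_j=0$ to a syzygy of the $g_k$'s by reducing $S(g_{j-1},g_j)$ via the division algorithm, and assemble the $t$ syzygies into the columns of a $(t+1)\times t$ matrix. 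The division-algorithm rules force each entry into $k[y]$ with $y$-degree strictly less than $d_j$ in the $j$-th column, and the degree matrix \eqref{uij} records how much freedom remains, matching $T_0(E)$ exactly.

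For (c): homogeneity of $I$ ($V_3$) forces each $f_i$ to be homogeneous of degree $t-i+m_i$, and since $n_{ij}$ replaces an entry of expected degree $u_{ij}$, homogeneity leaves the free parameter $p_{ij}y^{u_{ij}}$ exactly when $(i,j)\in\mathcal S(E)$ and kills $n_{ij}$ otherwise, recovering condition~(3). For $V_1(E)$ the last minor $f_t=\det(M_{[t+1]})$ is a lower-triangular determinant and thus equals $\pm\prod_j(y^{d_j}+n_{jj})$; since $I\cap k[y]=(f_t)$ by elimination, $y\in\sqrt I$ iff $f_t$ has $y=0$ as its only root, iff $n_{jj}=0$ for every $j$, which is~(1). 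For $V_2(E)$, imposing $V(I)=\{(0,0)\}$ further restricts $V(I|_{y=0})\subset\AA^1$ to the origin; substituting $y=0$ in $M$ and analysing the resulting ideal of $t$-minors in $k[x]$ yields precisely the no-constant-term conditions in~(2). The main obstacle will be the surjectivity normalisation: Hilbert-Burch determines the matrix only up to $\GL_{t+1}(R)\times\GL_t(R)$, so one must show each equivalence class contains a unique representative of the form $M_0(E)+N$ with $N\in T_0(E)$. My plan is to normalise column by column from right to left, using at each stage the column operations allowed by the shape dictated by $U(E)$ to kill contributions that violate the bounds in Definition~\ref{defi}, and to verify that those bounds are tight enough that no further adjustments survive.
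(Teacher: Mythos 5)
Your overall architecture matches the paper's (well-definedness via the syzygy relation and Buchberger, then bijectivity via a uniqueness statement for the generators, then translating conditions $(1)$--$(3)$), and your final remark about normalising column by column from right to left is exactly the mechanism the paper uses in its Lemma~\ref{basic2}. However, there is a genuine error in parts (a)--(b): the claim that \emph{``every monomial $x^a y^b$ appearing in $f_i$ satisfies $b<m_{t-a}$ whenever $a<t-i$, so the non-leading support of $f_i$ lies outside $E$''} is false, and it is the load-bearing step of your injectivity argument. Take the paper's own worked example $E=(x^3,xy^3,y^5)$, so $t=3$, $m=(0,3,3,5)$, $d=(3,0,2)$, and a generic $N\in T_0(E)$ with $n_{2,1},n_{3,1}$ of $y$-degree $\le 2$ and $n_{4,3}$ of $y$-degree $\le 1$. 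Expanding $f_0$ (the minor deleting row $1$) one finds the term $(n_{2,1}n_{4,3}-n_{3,1})\,x$, whose $y$-degree can reach $3$; thus $x y^3\in E$ appears in the tail of $f_0$. Consequently the signed $t$-minors of $M_0(E)+N$ are \emph{not} the reduced Gr\"obner basis of $\phi(N)$, and your proposed characterisation (``unique elements with leading term $L_i$ and tail outside $E$'') does not pick them out. Injectivity therefore does not follow from ``reading entries of $N$ off coefficients'' of such an object, because there is no such object to read from.

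The correct characterisation, which is where the paper's Lemma~\ref{basic2} does real work, is syzygy-theoretic rather than support-theoretic: the $f_i$ are the unique elements of $I$ with $\Lt(f_i)=x^{t-i}y^{m_i}$, $\Lc(f_i)=1$, and such that each S-polynomial $y^{d_i}f_{i-1}-xf_i$ is killed by a $k[y]$-linear combination of $f_{i-1},\dots,f_t$ with all coefficients of $y$-degree $<d_i$. The paper proves existence by modifying $f_{i-1}$ using division with remainder by $y^{d_i}+g_{i-1}$ (working from $i=t$ downward, matching your right-to-left plan), and proves uniqueness by exploiting that the $f_j$ are $k[y]$-linearly independent and that the ambiguity in coefficients has $y$-degree $<d_i$ while the multiplier has $y$-degree $d_i$. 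You need a statement like this, not the ``tails outside $E$'' claim, to make both injectivity and the surjectivity normalisation rigorous; your surjectivity sketch (``lift each Koszul relation by reducing $S(g_{j-1},g_j)$ via the division algorithm'') also glosses over why the resulting coefficients land in $k[y]$ at all --- this is precisely what the paper's Lemma~\ref{basic1} establishes before Lemma~\ref{basic2} can proceed. Part (c) of your proposal is essentially correct and matches the paper's Step~3.
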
 

By construction, the sets $T_i(E)$ are affine spaces and their dimension can be easily computed from their defining conditions. Therefore Theorem \ref{main} is an immediate consequence of \ref{realmain}. 

Before embarking in the proof of \ref{realmain} let us consider one example. 
\begin{example} Let $E=(x^3,xy^3,y^5)=(x^3,x^2y^3,xy^3,y^5)$. Then $m=(0,3,3,5)$, $d=(3,0,2)$ and 
$$M_0(E)=
\begin{pmatrix}
 y^3 & 0 & 0 \\
 -x & 1& 0 \\
 0 & -x & y^2 \\
 0 & 0 & -x \end{pmatrix}
\ \ \ \ U(E)=\begin{pmatrix}
 3 & 2 & 3 \\
 1 & 0& 1 \\
 2 & 1 & 2 \\
 1 & 0 & 1
\end{pmatrix}$$ 
We have $t=3$, $\min\{i:y^i\in E\}=5,$ $\dim R/E =11$ and $\# \mathcal{S}(E)=4.$

The matrices in $T_0(E)$ have the form: 

$$\begin{pmatrix}
 n_{1,1} & 0 & 0 \\
 n_{2,1} & 0& 0 \\
 n_{3,1} & 0& n_{3,3} \\
 n_{4,1} & 0 & n_{4,3} 
 \end{pmatrix}
 $$
 where the $n_{i,1}$ are polynomials in $y$ of degree $<3$ and the $n_{i,3}$ are polynomials in $y$ of degree $<2$. 
 The matrices in $T_1(E)$ are those of $T_0(E)$ such that $n_{1,1}=0$ and $n_{3,3}=0$. The matrices in $T_2(E)$ are those of $T_1(E)$ such that 
 $n_{2,1},n_{3,1},n_{4,3}$ have no constant term. Finally the matrices in $T_3(E)$ have the form: 
 
 $$\begin{pmatrix}
 0 & 0 & 0 \\
 p_{21}y & 0 & 0 \\
 p_{31} y^2 & 0 & 0 \\
 p_{41}y & 0 & p_{43}y 
 \end{pmatrix}
 $$
 where the $p_{ij}\in k$. 
 
 As predicted by \ref{main} we get 
 $\dim T_0(E)=16$, $\dim T_1(E)=11$, $\dim T_2(E)=8,$ and $\dim T_3(E)=4$.
 \end{example}

 The proof of \ref{realmain} consists of the following steps: 
 
 \begin{itemize} 
\item[(Step 1)] The map $\phi$ is well-defined.
\item[(Step 2)] The map $\phi$ is bijective. 
\item[(Step 3)] For $i=1,2,3$ we have $\phi(N)\in V_i(E)$ iff $N\in T_i(E)$. 
\end{itemize} 

Let us begin with

\begin{proof}[ Proof of Step 1] 
 For $N\in T_0(E)$ set $I=\phi(N)$. We show that $\Lt(I)=E$. For $i=0,\dots,t$ let $f_i$ be $(-1)^{t-i}$ times the determinant of the submatrix of $M_0(E)+N$ obtained by deleting the $(i+1)$-th row. By construction, $\Lt(f_i)=x^{t-i}y^{m_i}$ and $\Lc(f_i)=1$. We show that $f_0,\dots,f_t$ form a Gr\"obner basis of $I$. The syzygy module of leading terms of the $f_i$ is generated by the syzygies 
 \begin{equation}
 y^{d_{i}}(x^{t-i+1}y^{m_{i-1}})- x (x^{t-i}y^{m_{i}}) =0
 \label{syzygy}
 \end{equation}
 with $i=1,\dots,t$. To prove that the $f_i$'s form a Gr\"obner basis it is enough to show that the S-polynomials associated to these syzygies reduce to $0$. Since we have 
$$y^{d_i}f_{i-1}-xf_i+\sum_{j=i-1}^t n_{j+1,i} f_j=0$$ 
it is enough to show that if $y^{d_i}f_{i-1}-xf_i\neq 0$ then 
$\Lt( n_{j+1,i} f_j)\leq \Lt(y^{d_i}f_{i-1}-xf_i)$
for every $n_{j+1,i}\neq 0$. 
Note that the non-zero factors $n_{j+1,i}f_j$ have leading terms involving different powers of $x$. Hence $\max( \Lt( n_{j+1,i} f_j) : n_{j+1,i}\neq 0)=\Lt(y^{d_i}f_{i-1}-xf_i)$. 
\end{proof} 

Step 2 will be a corollary of the following two lemmas: 

\begin{lemma}\label{basic1} Let $I$ be an ideal of $R$ such that $\Lt(I)=E$ and let $f_0,\dots, f_t\in I$ such that $\Lt(f_i)=x^{t-i}y^{m_i}$ and $\Lc(f_i)=1$. Then for every $f\in I$ such that $\Lt(f)=x^{t-i}y^b$ for some $0\leq i \leq t$ there exist
polynomials $g_j \in k[y]$ wih $j=i,\dots, t$ with $\deg g_i=b-m_i$ such that $f+g_if_i+\dots+g_tf_t=0$. 
\end{lemma}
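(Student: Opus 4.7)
The plan is to proceed by Noetherian induction on the leading term of $f$, using the fact that lex order with $x>y$ is a well-ordering of the monomials of $R$. The base case $f=0$ is immediate (take all $g_j=0$).

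For the inductive step, I would reduce $f$ by $f_i$. Since $\Lt(f)=x^{t-i}y^b$ lies in $\Lt(I)=E$ and $m_i=\min\{j:x^{t-i}y^j\in E\}$, we must have $b\geq m_i$, so $f':=f-y^{b-m_i}f_i$ is a legal reduction step, and either $f'=0$ or $\Lt(f')<\Lt(f)$. The crucial observation, specific to lex with $x>y$, is that no monomial in $f_i$ (and hence in $y^{b-m_i}f_i$) has $x$-degree exceeding $t-i$. This forces the leading term of $f'$, if nonzero, to have the form $x^{t-i'}y^{b'}$ with $i'\geq i$. Applying the induction hypothesis to $f'$ then yields polynomials $g'_{i'},\dots,g'_t\in k[y]$ with $\deg g'_{i'}=b'-m_{i'}$ and $f'+g'_{i'}f_{i'}+\cdots+g'_t f_t=0$. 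Stitching back in the reduction by $f_i$ (absorbing $-y^{b-m_i}$ into the $f_i$-coefficient), I would produce the desired $g_i,\dots,g_t$, setting the coefficients of $f_j$ for $i<j<i'$ equal to zero when $i'>i$.

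The one bookkeeping point, and really the only part requiring thought, is ensuring $\deg g_i=b-m_i$. When $i'>i$, the coefficient of $f_i$ is simply $-y^{b-m_i}$, which has the right degree. When $i'=i$, the recursive call contributes $g'_i$ of degree $b'-m_i<b-m_i$ to the coefficient of $f_i$, so the added $-y^{b-m_i}$ remains the top-degree term and the $y$-degree of $g_i$ is $b-m_i$ as required. Beyond this, the argument is just the lex-order division algorithm restricted to tracking the $y$-quotient, and I do not anticipate any genuine obstacle.
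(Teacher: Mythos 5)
Your argument follows essentially the same path as the paper's: observe $b\geq m_i$, reduce $f$ against $f_i$, note the leading term strictly decreases, and iterate (phrased as Noetherian induction on the leading term); your extra observation that every monomial of $f$ and of $y^{b-m_i}f_i$ has $x$-degree at most $t-i$, forcing $\Lt(f')$ to have the form $x^{t-i'}y^{b'}$ with $i'\geq i$, is the same point the paper leaves implicit. The one slip is the formula $f' := f - y^{b-m_i}f_i$: since the lemma does not assume $\Lc(f)=1$, this subtraction fails to cancel the leading term when $\Lc(f)\neq 1$, so you should take $f' := f - \Lc(f)\,y^{b-m_i}f_i$ as the paper does; the degree bookkeeping for $g_i$ is unaffected since $\Lc(f)\neq 0$.
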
 

\begin{proof} By assumption, $f_0,\dots,f_t$ is a Gr\"obner basis of $I$. Hence $x^{t-i}y^b$ is divisible by some $x^{t-j}y^{m_j}$. Hence $t-j\leq t-i$ and $m_j\leq b$. It follows that $i\leq j$ and $m_i\leq m_j\leq b$. Therefore $f-\Lc(f)y^{b-m_i}f_i$ is still in $I$ and has a smaller leading term (if it is non-zero). We get the desired representation by iterating the procedure. 
\end{proof}

\begin{lemma}\label{basic2} 
 Let $I$ be an ideal of $R$ such that $\Lt(I)=E$. Then there exist $f_0,\dots, f_t\in I$ such that: 
\begin{itemize}
\item[(1)] $\Lt(f_i)=x^{t-i}y^{m_i}$ and $\Lc(f_i)=1$ for every $i=0,\dots, t$. 
\item[(2)] For every $i=1,\dots t$ there exists $n_{j+1,i}\in k[y]$ with $i-1\leq j\leq t$ and $\deg n_{j+1,i}<d_i$ such that 
\begin{equation}
y^{d_i}f_{i-1}-xf_i+\sum_{j=i-1}^t n_{j+1,i} f_j=0
\label{super1}
\end{equation}
\end{itemize} 
Furthermore the polynomials $f_i$ and $n_{j+1,i}$ with these properties are uniquely determined by $I$. 
\end{lemma}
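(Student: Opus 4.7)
The plan is to prove existence and uniqueness of the $f_i$ and $n_{j+1,i}$ separately, with the degree bound $\deg n_{j+1,i}<d_i$ serving as the common pivot.

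For existence, I would start with any $f_0,\dots,f_t\in I$ satisfying $\Lt(f_i)=x^{t-i}y^{m_i}$ and $\Lc(f_i)=1$; such a choice exists because $\Lt(I)=E$. Since these polynomials form a Gr\"obner basis of $I$, Buchberger's criterion guarantees that each S-polynomial $S_i=y^{d_i}f_{i-1}-xf_i$ reduces to zero, and iterating Lemma~\ref{basic1} packages the reduction as
\[
y^{d_i}f_{i-1}-xf_i+\sum_{j=i-1}^t \widetilde{n}_{j+1,i}\, f_j=0, \qquad \widetilde{n}_{j+1,i}\in k[y],
\]
though without any a priori bound on $\deg \widetilde{n}_{j+1,i}$. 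The additional content of (2) is the constraint $\deg n_{j+1,i}<d_i$. To enforce it I would iteratively modify the $f_i$'s by tail corrections drawn from the higher-index generators $f_k$ with $k>i$, which preserves every leading term; each such modification can be chosen to absorb high-$y$-degree terms in one $\widetilde{n}_{j+1,i}$ at the cost of perturbing others, and the combinatorics of the degrees ensures that a suitably ordered sequence of corrections (for instance, descending induction on $i$ with a monotone complexity measure such as the total ``excess $y$-degree'' $\sum_{i,j}\max(0,\deg \widetilde{n}_{j+1,i}-d_i+1)$) terminates at $(f_i, n_{j+1,i})$ satisfying both (1) and the bounds in (2).

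For uniqueness, suppose $(f_i,n_{j+1,i})$ and $(f'_i,n'_{j+1,i})$ both satisfy (1)--(2). Set $g_i=f_i-f'_i\in I$; either $g_i=0$ or $\Lt(g_i)<x^{t-i}y^{m_i}$, and since $\Lt(g_i)\in E$ its $x$-degree must be strictly less than $t-i$. By Lemma~\ref{basic1} I can write $g_i=\sum_{l>i}p_{l,i}\, f'_l$ with $p_{l,i}\in k[y]$. Subtracting the two instances of (2) for index $i$ and substituting these expansions yields an identity of the form $\sum_k C_{k,i}\, f'_k=0$ with each $C_{k,i}\in k[y]$ built from the $n_{*,i}$, $n'_{*,i}$, and $p_{*,*}$. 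Since the $f'_k$ have strictly decreasing $x$-degrees in their leading monomials they are $k[y]$-linearly independent, so $C_{k,i}=0$ for every $k$. Unpacking these equations with the degree bound $\deg n_{j+1,i},\deg n'_{j+1,i}<d_i$---which prevents any high-$y$-degree part of an $n$-difference from being compensated by a $p$-expansion---one concludes $p_{l,i}=0$ and $n_{j+1,i}=n'_{j+1,i}$, proceeding inductively on $i$.

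The main obstacle is the degree bound itself and its dual role. On the existence side, Gr\"obner division does not deliver it automatically, and the iterative modifications must be coordinated so that corrections in one column do not undo the bounds achieved in earlier ones. On the uniqueness side, the same bound is precisely what removes residual gauge freedom in the matching of coefficients. Careful bookkeeping, organized by the column index $i$ and by the $x$-degree, is the delicate step tying the two directions together.
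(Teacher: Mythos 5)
Your overall plan matches the paper's: parametrize a Gr\"obner basis $f_0,\dots,f_t$, obtain the syzygies via Schreyer/Lemma~\ref{basic1}, and then enforce the degree bound $\deg n_{j+1,i}<d_i$ by modifying $f_{i-1}$ with tail corrections from the higher-index generators. The gap is in the \emph{mechanism} for achieving the bound. You propose an iterative process with a monotone complexity measure, and you yourself flag the worry that ``corrections in one column do not undo the bounds achieved in earlier ones.'' But the paper does it in a single step, and identifying that step is the real content of the lemma. Fix $k$ in a descending induction (syzygies $k+1,\dots,t$ already arranged, and they do not involve $f_{k-1}$). Lemma~\ref{basic1} applied to $y^{d_k}f_{k-1}-xf_k$ gives $y^{d_k}f_{k-1}-xf_k+g_{k-1}f_{k-1}+\sum_{j\geq k}g_jf_j=0$ and, crucially, forces $\deg g_{k-1}<d_k$ automatically (because any residual term with $x$-exponent $t-(k-1)$ has $y$-exponent $<m_k$). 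Setting $h=y^{d_k}+g_{k-1}$, which therefore has degree exactly $d_k$, one performs division with remainder $g_j=hq_j+r_j$ for $j\geq k$ and replaces $f_{k-1}$ by $f_{k-1}+\sum_{j\geq k}q_jf_j$; the relation becomes $hf'_{k-1}-xf_k+\sum r_jf_j=0$ with all $r_j$ of degree $<d_k$, in one shot. No termination argument or complexity measure is needed, and modifying $f_{k-1}$ only touches the coefficient of the $f_j$ used, so nothing is ``perturbed.'' You do not isolate the facts that (a) $g_{k-1}$ is small for free, and (b) one divides by $h=y^{d_k}+g_{k-1}$ rather than by $y^{d_k}$; without these the existence half is not actually proved.

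Your uniqueness sketch has a related imprecision. You subtract the two instances of equation~\eqref{super1} for a general index $i$ and then claim an identity of the form $\sum_k C_{k,i}f'_k=0$ with $C_{k,i}\in k[y]$. But after subtracting, a term $-x\,g_i$ survives, and substituting $g_i=\sum_{l>i}p_{l,i}f'_l$ produces coefficients $-p_{l,i}x$, which are \emph{not} in $k[y]$; you cannot invoke $k[y]$-linear independence at that point. The clean way out, which the paper uses, is to run a genuine descending induction: $f_t=f'_t$ because both are the monic generator of $I\cap k[y]$; then assuming $f_j=f'_j$ for $j\geq k$, the term $-xg_k$ vanishes, and after substituting $f'_{k-1}=f_{k-1}+\sum_{j\geq k}g_jf_j$ into the primed relation and subtracting the unprimed one, linear independence yields $h'g_j=n_{j+1,k}-n'_{j+1,k}$, forcing $g_j=0$ by the degree comparison $\deg h'=d_k>\deg(n_{j+1,k}-n'_{j+1,k})$. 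You do have the right degree insight (``prevents any high-$y$-degree part of an $n$-difference from being compensated''), but you should make the descending induction explicit so the $-xg_i$ term actually disappears and the coefficient comparison is legitimate.
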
 

\begin{proof} We prove the existence first. A set of polynomials $f_0,\dots, f_t\in I$ satisfying $(1)$ clearly exists. We show how to modify them it in order to fulfill (2). For a given $k$, $1\leq k\leq t$ suppose that we have already modified $f_k,\dots,f_t$ so that (1) is still fulfilled and that (2) is fulfilled for $i=k+1,\dots,t$. We show how to modify $f_{k-1}$ in order to fulfill (2) for $i=k$. Note that $y^{d_k}f_{k-1}-xf_k$ is in $I$ and involves only terms with $x$-exponent $\leq t-(k-1)$ and that if $x^{t-(k-1)}y^b$ is indeed present then $b<m_k$. By \ref{basic1} we have that there exists $g_{k-1},\dots,g_t\in k[y]$ such that $g_{k-1}$ is either $0$ or of degree $<d_k$ and 
 \begin{equation}
 \label {prima} 
 y^{d_k}f_{k-1}-xf_k+g_{k-1}f_{k-1}+g_{k}f_{k}+\dots+g_tf_t=0 
 \end{equation} 
 
Set $h=y^{d_k}+g_{k-1}$ and perform for $j=k,\dots,t$ division with remainder: $g_j=hq_j+r_j$ with $q_j,r_j\in k[y]$ and $r_j$ either $0$ or of degree $<d_k$. Then we have 

\begin{equation}
\label {seconda} 
 y^{d_k}f'_{k-1}-xf_k+g_{k-1}f'_{k-1}+r_{k}f_{k}+\dots+r_tf_t=0
 \end{equation} 
 
with $f'_{k-1}=f_{k-1}+q_{k}f_k+\dots+q_tf_t$. Note that $f'_{k-1}$ is in $I$ and 
$\Lt(f_{k-1}')=\Lt(f_{k-1})$ and $\Lc(f_{k-1})=\Lc(f'_{k-1})$. We may replace $f_{i-1}$ with $f'_{i-1}$ and \ref{seconda} is the desired relation. 

We prove now the uniqueness of the $f_i$'s and $n_{j+1,i}$ fulfilling (1) and (2). Suppose we have other polynomials $f'_i$ and $n'_{j+1,i}$'s fulfilling (1) and (2). Note that $f_t=f_t'$ since they are both the monic generator of $I\cap k[y]$. So we may assume that $f_j=f_j'$ for $j=k,\dots,t$ and show that $f_{k-1}=f_{k-1}'$. 
By assumption we have equations: 

\begin{equation}
y^{d_k}f_{k-1}-xf_k+ n_{k,k} f_{k-1}+ \sum_{j=k}^t n_{j+1,k} f_j =0
\label{uniq}
\end{equation}

\begin{equation}
y^{d_k}f'_{k-1}-xf'_k+ n'_{k,k} f'_{k-1}+ \sum_{j=k}^t n'_{j+1,k} f'_j =0
\label{uniq1}
\end{equation}

where the $n_{j+1,k}$ and $n'_{j+1,k}$ are polynomials in $k[y]$ of degree $<d_k$. 

By \ref{basic1} applied with $f=f'_{k-1}$ we have an equation: 

\begin{equation}
f'_{k-1}=f_{k-1}+g_kf_k+\dots+g_tf_t
\label{uniq2}
\end{equation}

with $g_j\in k[y]$. Set $h=y^{d_i}+ n_{k,k}$ and $h'=y^{d_i}+ n'_{k,k}$. Replacing $f'_{k-1}$ in \ref{uniq1} with the right hand side of \ref{uniq2} and then subtratting \ref{uniq} we obtain: 

\begin{equation}
(h'-h)f_{k-1}+\sum_{j=k}^t (h'g_j+n'_{k,j+1} -n_{k,j+1}) f_j=0
\label{uniq3}
\end{equation}

Since the leading terms of the $f_i$'s involves distinct powers of $x$, the $f_i$'s are linearly independent over $k[y]$. Hence the coefficients $h'g_j+n'_{k,j+1} -n_{k,j+1}$ of \ref{uniq3} must be $0$. Therefore $h'g_j=-n'_{k,j+1}+n_{k,j+1}$. But $n'_{k,j+1}+n_{k,j+1}$ has degree $<d_k$ and $h'$ has degree $d_k$. Therefore $g_j=0$ for every $j$ and hence $f_{k-1}=f'_{k-1}$. Having shown that the $f_i$'s fulfilling (1) and (2) are uniquely determined by $I$, it remains that the coefficients $n_{j+1,i}$ are also uniquely determined. This is easy: given others coefficients $n'_{j+1,i}$ satisfying (2), say
\begin{equation}
y^{d_i}f_{i-1}-xf_i+\sum_{j=i-1}^t n'_{j+1,i} f_j=0
\label{super2}
\end{equation}
we may subtract \ref{super1} from \ref{super2} and get 

$$\sum_{j=i-1}^t (n'_{j+1,i} -n_{j+1,i})f_j=0.$$

This implies $n'_{j+1,i}=n_{j+1,i}$ by the linear indipendence of the $f_i$'s over $k[y]$. 
\end{proof} 

We are ready to prove: 

\begin{proof}[Proof of Step 2]
We first prove that $\phi$ is injective. Suppose $I=\phi(N)=\phi(N')$ for matrices $N,N'\in T_0(E)$. We have seen in the proof of Step 1 that the signed $t$-minors $f_0,\dots,f_t$ of $M_0(E)+N$ fulfill (1) and (2) of \ref{basic2}. The same it is true for the signed $t$-minors $f'_0,\dots,f'_t$ of $M_0(E)+N'$. By the uniqueness of the $f_i$'s in \ref{basic2} we have that $f_i=f_i'$ for every $i$. By the uniqueness of the coefficients of the equation of \ref{basic2} the conclude that $N=N'$. 

We show now that $\phi$ is surjective. Let $I\in V_0(E)$. We may find $f_0,\dots,f_t\in I$ satisfying (1) and (2) of \ref{basic2}. The equation \ref{super1} is the reduction to $0$ of the $S$-polynomial corresponding to the syzygy \ref{syzygy} among the leading terms. As we know that these syzygies generate the syzygy module of the leading term of the $f_i$, Schreyer's theorem \ implies that the equations \ref{super1} give a system of generators for the syzygy module of the $f_i$'s. The corresponding matrix is of the form $M_0(E)+N$ with $N\in T_0(E)$ and the Hilbert-Burch theorem implies that $\phi(N)=I$. 
\end{proof} 

Now we prove: 

\begin{proof}[Proof of Step 3] 
Throughout the proof, $N$ denotes a matrix in $T_0(E)$, $I=\phi(N)$ and $f_0,\dots,f_t$ the signed $t$-minors of $M_0(E)+N$.

Since the $f_i's$ form a Gr\"obner basis with respect to the lexicographic order, then $f_t=\Pi_{i=1}^t (y^{d_i}+n_{i,i})$ generates $I\cap k[y]$. 
We have that $y\in \sqrt{I}$ iff $f_t$ divides some power of $y$. But this is clearly equivalent to the vanishing of $n_{i,i}$ for $i=1,\dots,t$. This proves that $N\in T_1(E)$ iff $\phi(N)\in V_1(E)$. 

To prove that $N\in T_2(E)$ iff $\phi(N)\in V_2(E)$ we may assume that $N\in T_1(E)$ and we show that 
$\sqrt{I}=(x,y)$ iff the $N$ fulfills condition (2) of \ref{defi}. As we know already that $y\in \sqrt{I}$, we have that $ \sqrt{I}=\sqrt{I+(y)}$. Replace $y$ with $0$ is $M_0(E)+N$, and call $W_1$ the resulting matrix. The first row of $W_1$ is $0$ (since $d_1>0$). Denote by $W$ the submatrix of $W_1$ obtained by deleting the first row. By construction $I+(y)=(\det W, y)$. We have to show that $\det W$ is a power of $x$ iff $N$ fulfills condition (2) of \ref{defi}. 
 
Let $C=\{ i : i=1,\dots,t \mbox{ and } d_i>0\}$, say $C=\{i_1,\dots, i_p\}$ with $i_1<\dots<i_p$. By assumption, $i_1=1$ and we set $i_{p+1}=t+1$ by convention. 
The matrix $W$ has a block decomposition 

$$W=\begin{pmatrix}
J_1&0 & 0& \cdots & 0\\
 *&J_2& 0& \cdots & 0 \\
*& *& J_3& \cdots &0 \\ 
\vdots&\vdots & \ddots & \ddots& \vdots\\
 *& *& \cdots &*&J_p\\ 
 \end{pmatrix}$$
 where each $J_v$ is a square block of size, say, $u=i_{v+1}-i_v$ and has the form 

$$\begin{pmatrix}
-x+a_1& 1 & 0& \cdots& \cdots & 0\\
 a_2 & -x& 1& 0 & \cdots & 0 \\
a_3 & 0& -x& \cdots & \cdots &0 \\ 
\vdots&\vdots & \vdots & \ddots & \ddots& \vdots\\
 & 0& \cdots & 0 &-x&1\\ 
a_u & 0& \cdots & 0 &0&-x\\ 
 \end{pmatrix}$$
 where $a_j=n_{i_v+j,i_v}(0)$ for $j=1,\dots,u$. Now $\det W=\Pi_v \det J_v$. The determinant of the matrix $J_v$ is, up to sign, 
$x^u-a_1x^{u-1}-a_2x^{u-2}-\dots-a_u$. Hence $\det W$ is a power of $x$ if and only if the coefficients $a_j$ in every the $J_v$ are $0$. This is condition (2) of \ref{defi}.

Finally we have to show that $N\in T_3(E)$ iff $I$ is homogeneous. The ``only if" direction is an immediate consequence of the fact that the matrix $M_0(E)+N$ is homogeneous for every $N\in T_3(N)$. The ``if" direction follows from the observation that the polynomials $f_i$ and $n_{i,j}$ of \ref{basic2} are homogeneous if we start with a homogeneous ideal $I$. 
\end{proof} 

 \section{Betti strata of $V(E)$} 
 \label{betti} 
 
Let $h=h(z)$ be the Hilbert series of a graded Artinian quotient of $R$. It is known that $h(z)$ is of the form $$h(z)=1+2z+\dots+cz^{c-1}+\sum _{j=c}^sh_jz^j$$ with $s+1\ge c \ge h_c\ge \dots \ge h_s>0$. Denote by $\mathbb{G}(h)$ the variety that parametrizes graded ideals $I$ in $R$ such that the Hilbert series $h_{R/I}(z)=h(z).$ Iarrobino proved in \cite {I1} that $\mathbb{G}(h)$ is a smooth projective variety whose dimension is given by the beautiful formula: 
\begin{equation}
\dim \mathbb{G}(h)=h_c+\sum_{j=c}^s p_jp_{j+1}
\label{bella}
\end{equation}
where $p(z)=\sum_0^{s+1}p_iz^i=(1-z)h(z)$ is the first difference of $h(z)$.

Among the ideals with Hilbert series $h(z)$, the lex-segment plays a special role. We denote it by $L(h)$ or just $L$ if $h(z)$ is clear from the context. 
If $\chara k=0$, then $V(L)$ is dense in $\mathbb{G}(h)$ so that $\dim V(L)=\dim \mathbb{G}(h)$. 
Therefore, according to Corollary \ref{main}, we have 
\begin{equation}
\dim \mathbb{G}(h)=\#\mathcal{S}(L)
\label{brutta} 
\end{equation} 

To double-check, the suspicious reader can show directly that the right-hand side of the formulas \ref{bella} and \ref{brutta} indeed coincide. It is a simple, but not obvious, exercise. 
 
We come now to study the Betti strata of $V(E)$. For a homogeneous ideal $I$ in $k[x,y]$ denote by $\beta_{i,j}(I)$ the $(i,j)$-th Betti number. In particular, $\beta_{0,j}(I)$ is the number of minimal generators of $I$ of degree $j$. It is well known, each pairs of the three sets of invariants $\{\beta_{0,j}(I)\}_j$, $\{\beta_{1,j}(I)\}_j$ and the $\{\dim I_j\}_j$ determine the third. 
Given integers $j$ and $u$ we define: 

 $$V(E, j, u)= \{ I\in V(E) : \beta_{0,j}(I)=u \}$$ 
 $$V(E, j, \geq u)= \{ I\in V(E) : \beta_{0,j}(I) \geq u \}$$ 

If $\beta=(\beta_1,\dots, \beta_j,\dots)$ is a vector with integral entries we define 
$$V(E, \beta)=\bigcap_j V(E, j, \beta_j) $$ 
and 
\begin{equation}
V(E,\geq \beta)=\bigcap_j V(E, j, \geq \beta_j) 
\label{transver}
\end{equation}

We consider a monomial ideal $E$ and its associated sequence $m_0,\dots,m_t$. The ideals in $V(E)$ are parametrized by the affine space $\AA^n$ where $n=\# \mathcal{S}(E)$. We denote by $p_{ij}$ with $(i,j)\in \mathcal{S}(E)$ (or simply by $p_1,\dots,p_n$) the coordinates of $\AA^n$. 
Given $p\in \AA^n$ we consider the matrix $N\in T_3(E)$, defined in (3) of Definition \ref{defi}. Set $M(p)=M_0(E)+N$. By the Hilbert-Burch theorem, the ideal $I$ of maximal minors of $M(p)$ has the free resolution: 

 \begin{equation}
 0\to \bigoplus_{i=1}^tR(-b_i)\overset{M(p)} \to \bigoplus_{i=1}^{t+1} R(-a_i)\to 0
\label{hb} 
 \end{equation}

where $a_i=t+1-i+m_{i-1}$ for $i=1,\dots,t+1$ and $b_i=a_{i+1}+1$ for $i=1,\dots,t$. For every $j$ we set 

$$w_j=\{ i : a_i=j\} \quad \mbox{ and } \quad v_j=\{ i : b_i=j\}.$$
 
Tensoring \ref{hb} with $k$ and taking the degree $j$ component we have the complex of vector spaces
$$k^{\# v_j} \overset{M(p)_j } \to k^{\# w_j}\to 0$$ 
whose homology gives the Betti numbers of $I$. 
Here $M(p)_j$ is the submatrix of $M(p)$ with rows indices $w_j$ and column indices $v_j$. 

It follows that 
\begin{equation}
\beta_{0,j}(I)=\#w_j-\rank M(p)_j
\label{etabeta}
\end{equation} 
and hence $V(E,j, \geq u)$ is the determinantal variety defined by the condition 
 
 $$\rank M(p)_j\leq \# w_j-u.$$ 

If $i_1\in w_j$ and $i_2\in v_j$ then $(i_1,i_2)$-th entry of $M(p)$ is: 

$$
\begin{array}{ll}
p_{i_1i_2} & \mbox{ if } i_1>i_2 \mbox{ and } d_{i_2}>0,\\
0  & \mbox{ if } i_1>i_2 \mbox{ and } d_{i_2}=0,\\
1  & \mbox{ if } i_1=i_2, \\ 
0  & \mbox{ if } i_1<i_2. \\ 
\end{array}
$$
Hence the matrices $M(p)_j$ have entries that are either variables or $0$ or $1$. Furthermore the sets of the variables involved in $M(p)_j$ and in $M(p)_i$ are disjoint if $i\neq j$. 
To summarize: 

\begin{lemma}
\label{trasva}
The variety $V(E,\geq\beta)$ is the transversal intersection of the determinantal varieties $V(E,j, \geq \beta_j)$. 
In particular, the codimension of $V(E,\geq \beta)$ is the sum of the codimensions of the $V(E,j, \geq \beta_j)$ and 
$V(E,\geq \beta)$ is irreducible iff $V(E,j, \geq \beta_j)$ is irreducible for every $j$. 
\end{lemma}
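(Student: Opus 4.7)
The plan is to reduce the lemma to a formal property of products, exploiting the disjointness of variables noted just before the statement. The key input is that for $j \neq j'$ the matrices $M(p)_j$ and $M(p)_{j'}$ involve disjoint sets of variables: an entry of $M(p)$ at position $(i_1, i_2)$ only survives after tensoring with $k$ when its degree $b_{i_2} - a_{i_1} = u_{i_1 i_2}$ is zero, in which case it lies in a uniquely determined stratum $j = a_{i_1} = b_{i_2}$.

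First I would partition the coordinates $p_{i_1 i_2}$ of $\AA^n$. Let $S_j \subseteq \mathcal{S}(E)$ consist of those indices for which $u_{i_1 i_2} = 0$ and $a_{i_1} = j$, and let $S_\infty$ collect the remaining coordinates, namely those with $u_{i_1 i_2} > 0$, which do not appear in any $M(p)_j$ (since then $p_{i_1 i_2}y^{u_{i_1 i_2}}$ has positive degree and dies after $\otimes k$). This yields a product decomposition
\[
\AA^n \;=\; \Bigl(\prod_j \AA^{S_j}\Bigr) \times \AA^{S_\infty}.
\]
By \eqref{etabeta}, the condition $\rank M(p)_j \leq \#w_j - \beta_j$ cutting out $V(E, j, \geq \beta_j)$ involves only the coordinates in $S_j$, so
\[
V(E, j, \geq \beta_j) \;=\; Z_j \times \prod_{i \neq j} \AA^{S_i} \times \AA^{S_\infty},
\]
where $Z_j \subseteq \AA^{S_j}$ is the corresponding determinantal variety. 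Intersecting over all $j$ produces
\[
V(E, \geq \beta) \;=\; \Bigl(\prod_j Z_j\Bigr) \times \AA^{S_\infty}.
\]

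From this product description both conclusions drop out. Codimension is additive over products, so $\operatorname{codim} V(E, \geq \beta) = \sum_j \operatorname{codim} Z_j = \sum_j \operatorname{codim} V(E, j, \geq \beta_j)$, which is the very meaning of transversal intersection. Likewise, a finite product of varieties is irreducible if and only if each factor is, giving the irreducibility claim. There is no substantial obstacle in this argument; the one point worth stating carefully is the disjointness of the variable sets in the different strata, and this is immediate from the degree identity $b_{i_2} - a_{i_1} = u_{i_1 i_2}$ recalled above together with the fact that a variable $p_{i_1 i_2}$ is attached to a single row index $i_1$ and a single column index $i_2$.
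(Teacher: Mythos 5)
Your argument is correct and is exactly the one the paper has in mind: the paragraph immediately preceding the lemma records that each $M(p)_j$ has entries that are $0$, $1$, or distinct variables, and that the variable sets for distinct $j$ are disjoint, and the lemma is stated as a ``summary'' of that observation. Your product decomposition $\AA^n = \bigl(\prod_j \AA^{S_j}\bigr)\times\AA^{S_\infty}$ just makes explicit what ``transversal intersection'' means here, with the identity $u_{i_1 i_2}=b_{i_2}-a_{i_1}$ correctly justifying the disjointness.
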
 

From now on we concentrate our attention on the variety $V(E,j,\geq u)$. 
 If $i\in w_j\cap v_j$ then $(i,i)$-entry of $M(p)_j$ is $1$ and all the other entries in that column are $0$. So we can simply get rid of the column and the row containing the $1$'s. Denote by ${M(p)_j^*}$ the submatrix that we get from $M(p)_j$ removing the $1$'s together with their columns and rows. Since the $1$'s are in different rows and columns we have 
$$\rank M(p)_j=\rank M(p)_j^*+\# (w_j\cap v_j)$$

Noticing that $ \# (w_j\setminus w_j\cap v_j)$ is exactly $\beta_{0, j}(E)$ we can conclude that:

\begin{lemma} 
 The variety $V(E,j, \geq u)$ is defined by the condition 
$$\rank M(p)_j^*\leq \beta_{0, j}(E) -u$$
\end{lemma}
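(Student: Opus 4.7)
The plan is to combine the characterization in \eqref{etabeta} with the rank decomposition derived in the paragraph immediately preceding the lemma. This is essentially bookkeeping, so I would proceed in three short steps.

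First, by \eqref{etabeta} we have $\beta_{0,j}(I) = \#w_j - \rank M(p)_j$, so by definition $V(E,j,\geq u)$ is cut out by the condition $\rank M(p)_j \leq \#w_j - u$. Second, I substitute the equality $\rank M(p)_j = \rank M(p)_j^* + \#(w_j \cap v_j)$ already established above. This converts the defining inequality into
\begin{equation*}
\rank M(p)_j^* \;\leq\; \#w_j - \#(w_j \cap v_j) - u \;=\; \#\bigl(w_j \setminus (w_j \cap v_j)\bigr) - u.
\end{equation*}
Third, I invoke the identification $\#(w_j \setminus (w_j \cap v_j)) = \beta_{0,j}(E)$ already recorded in the text, yielding the claimed condition $\rank M(p)_j^* \leq \beta_{0,j}(E) - u$.

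There is no real obstacle: the argument is a direct chain of three facts each justified above. If one wants a self-contained verification of the last identity, it can be obtained by specializing the complex \eqref{hb} at $p=0$, where $M(0) = M_0(E)$ and, by the very definition of the starred matrix, $M(0)_j^*$ has all entries equal to zero; applying \eqref{etabeta} to the monomial ideal $I = E$ then gives $\beta_{0,j}(E) = \#w_j - \rank M_0(E)_j = \#w_j - \#(w_j \cap v_j)$, as needed.
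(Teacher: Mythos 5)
Your argument is correct and matches the paper's reasoning exactly: combine \eqref{etabeta} with the rank identity $\rank M(p)_j = \rank M(p)_j^* + \#(w_j\cap v_j)$ and the observation $\#(w_j\setminus (w_j\cap v_j)) = \beta_{0,j}(E)$, all of which are stated in the paragraph preceding the lemma. Your extra verification of the last identity by specializing at $p=0$ (where $M(0)_j$ has only the diagonal $1$'s and $M(0)_j^*$ vanishes) is a clean check of a fact the paper merely asserts, but it is the same route, not a different one.
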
 

The matrices $M(p)_j^*$ have entries that are either $0$ or distinct variables and if the $(i_1,i_2)$-th entry is $0$ the same is true also for the $(h_1,h_2)$-th with $h_1\leq i_1$ and $h_2\geq i_2$, that is, they look like

\begin{equation}
 \left(
 \begin{array}{cccccc} 
\bullet & \bullet & 0 & 0 & 0 & 0 \\ 
\bullet & \bullet & 0 & 0 & 0 & 0 \\ 
\bullet & \bullet & \bullet & 0 & 0 & 0 \\ 
\bullet & \bullet & \bullet & 0 & 0 & 0 \\ 
\bullet & \bullet & \bullet & \bullet &\bullet & 0 \\ 
\bullet & \bullet & \bullet & \bullet & \bullet & \bullet \\ 
\bullet & \bullet & \bullet & \bullet & \bullet & \bullet
\end{array}\right) 
\label{upper} 
\end{equation}

where each $\bullet$ is a distinct variable. 

\begin{remark} The ideals of minors of a given size of the matrices of type \ref{upper} are radical (to prove it one can use Gr\"obner bases) but obviously not prime in general. They can have clearly minimal primes of different codimension. 
\end{remark} 

The following example shows that $V(E,\geq u)$ is not irreducible in general. 
\begin{example} 
Let $E=(x^6,x^5y, x^4y^3, x^3y^4, x^2y^4,xy^5, y^7)$. Here $d=(1,2,1,0, 1,2)$ and 
$a=(6,6,7,7,6,6,7)$ and $b=(7,8,8,7,7,8)$. 
We have that $V(E)$ is an $8$-dimensional affine space parametrized by the matrix
$$
M(p)=\begin{array}{rrrrrrrrr}
& \vline & 7 & 8& 8& 7& 7& 8 \\
\hline 
6&\vline &y & 0& 0& 0& 0& 0 \\
6&\vline&-x & y^2& 0& 0& 0& 0 \\
7&\vline&p_1& -x+p_4y& y& 0& 0& 0 \\
7&\vline&p_2& p_5y& -x& 1& 0& 0 \\
6&\vline&0 & 0& 0& -x& y& 0 \\
6&\vline&0 & 0& 0& 0& -x& y^2 \\
7&\vline&p_3& p_6y& 0& 0& p_7& -x+p_8y
 \end{array}$$ 
 The numbers on the boundary are the degree of the syzygies (the first row) and degree of the generators (the first column). 
 
Here the only interesting variety is $V(E,7,\geq u)$. We have $w_7=\{3,4,7\}$ and $v_7=\{1,4,5\}$. The matrix $M(p)_7$ is obtained by $M(p)$ by selecting the rows and columns marked with $7$:
$$
M(p)_7=
\begin{array}{cccccccc}
&\vline & 7 & 7& 7 \\
\hline 
7&\vline&p_1& 0& 0 \\
7&\vline&p_2& 1& 0 \\
7&\vline&p_3& 0& p_7 
 \end{array}$$ 
To get $M(p)^*_7$ we have to cancel rows and columns containing $1$'s: 
$$
M(p)^*_7=\left(\begin{array}{cccccccc}
 p_1& 0 \\
 p_3& p_7 
 \end{array}
\right) $$ 
 Hence $V(E,7,\geq u)$ is defined by the condition 
 $$\rank M(p)_7^*\leq 2-u$$
Therefore $V(E,7,\geq 1)$ is defined by $p_1p_7=0$ and has  two irreducible components of codimension $1$. The veriety  $V(E,7,\geq 2)$ is defined by $p_1=p_3=p_7=0$ and is irredicible of codimension $3$. 
\end{example}

The above example can be generalize to show that every matrix of type \ref{upper} can arise as $M(p)_j^*$ for some $E$ and some $j$. Instead of given complicated and cumbersome details, we just give an example (hopefully illuminating) leaving the details to the interested readers. 

\begin{example} 
Starting with $E$ associated to the sequence 
$$d=(1, 1, 2, 1, 0, 1, 1, 1, 2, 1, 1, 0, 1, 1, 2, 1, 1, 1)$$
 the matrix 
$M(p)_{19}^*$ is: 
 
 $$
\begin{array}{ccccccc}
 \bullet & \bullet & 0 & 0 & 0 & 0 & 0 \\
 \bullet & \bullet & \bullet & \bullet & \bullet & 0 & 0 \\
 \bullet & \bullet & \bullet & \bullet & \bullet & 0 & 0 \\
 \bullet & \bullet & \bullet & \bullet & \bullet & \bullet & \bullet \\ 
 \bullet & \bullet & \bullet & \bullet & \bullet & \bullet & \bullet \\ 
 \bullet & \bullet & \bullet & \bullet & \bullet & \bullet & \bullet \\
 \bullet & \bullet & \bullet & \bullet & \bullet & \bullet & \bullet 
 \end{array}
 $$
 and $V(E,19,\geq u)$ is defined by condition $\rank M(p)_{19}^*\leq 7-u$. 
\end{example} 
 
If $E$ is a lex-segment then $d_i>0$ for every $i$. This has the effect that the matrices $M(p)_j$ are matrices of indeterminates. Hence we obtain the following results of Iarrobino \cite{I2}: 

\begin{corollary} 
Let $L$ be a lex-segment. 
Then variety $V(L,j,\geq u)$ is defined by the condition $\rank M(p)_j\leq \beta_{0,j}(L)-u$ where $M(p)_j$ is a matrix of distinct variables of size $\beta_{0,j}(L)\times \beta_{1,j}(L)$. In particular: 
\begin{itemize} 
\item[(1)] $V(L,j,\geq u)$ is irreducible. It coincides with the closure of $V(L,j,u)$ provided $V(L,j,u)$ it is not empty, that is, provided $\beta_{0,j}(L)-\beta_{1,j}(L)\leq u \leq \beta_{0,j}(L)$. 
\item[(2)] If $\beta_{0,j}(L)-\beta_{1,j}(L)\leq u \leq \beta_{0,j}(L)$ then the codimension of $V(L,j,\geq u)$ is 
$(\beta_{1,j}(L)-\beta_{0,j}(L)+u)u$. 
\end{itemize}
\end{corollary}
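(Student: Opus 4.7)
The plan is to derive the corollary directly from the matrix description given above, combined with classical results on generic determinantal varieties. The crucial step is to verify that, when $L$ is a lex-segment, the matrix $M(p)_j$ is a matrix of distinct indeterminates of size $\beta_{0,j}(L)\times \beta_{1,j}(L)$, with no entries inherited from $M_0(L)$ or forced to be zero. Once that is established, both assertions follow.

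First I would observe that for a lex-segment $w_j\cap v_j=\emptyset$ for every $j$, so the reduction $M(p)_j^*=M(p)_j$ is trivial and no rows or columns need to be discarded. Indeed, using $b_i=a_{i+1}+1$, an index $i$ lies in $w_j\cap v_j$ iff $a_i=j$ and $a_{i+1}=j-1$, which forces $d_i=0$, impossible for a lex-segment. Moreover, since $d_i>0$ for every $i$, the generators $x^{t-i}y^{m_i}$ and the syzygies encoded by $M_0(L)$ are already minimal, so that $\# w_j=\beta_{0,j}(L)$ and $\# v_j=\beta_{1,j}(L)$.

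Next I would show that every entry of $M(p)_j$ is a distinct variable $p_{i_1 i_2}$. Fix $(i_1,i_2)\in w_j\times v_j$; the $(i_1,i_2)$-entry of $M(p)=M_0(L)+N$ has degree $b_{i_2}-a_{i_1}=0$. The diagonal entries $y^{d_i}$ of $M_0(L)$ have degree $d_i\geq 1$ and the subdiagonal entries $-x$ have degree $1$, so neither can occupy a degree-$0$ position; the constraint $a_{i_1}=a_{i_2+1}+1$ combined with the monotonicity of the sequence $a$ also directly rules out $i_1=i_2$ and $i_1=i_2+1$. Hence only $n_{i_1,i_2}=p_{i_1 i_2}y^{u_{i_1 i_2}}$ contributes, and degree zero forces $u_{i_1 i_2}=0$. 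Since $d_{i_2}>0$ we have $(i_1,i_2)\in \mathcal{S}(L)$, and the entry is precisely the distinct parameter $p_{i_1 i_2}$.

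With $M(p)_j$ identified as a generic $\beta_{0,j}(L)\times \beta_{1,j}(L)$ matrix, both claims follow from the classical result that the locus of $p\times q$ matrices of rank at most $r$ is irreducible of codimension $(p-r)(q-r)$. Taking $r=\beta_{0,j}(L)-u$ yields the codimension formula in~(2). For~(1), irreducibility of $V(L,j,\geq u)$ is the same classical statement, and its open stratum $V(L,j,u)=V(L,j,\geq u)\setminus V(L,j,\geq u+1)$ is non-empty precisely when $\beta_{0,j}(L)-u$ is an admissible rank, i.e.\ when $\beta_{0,j}(L)-\beta_{1,j}(L)\leq u\leq \beta_{0,j}(L)$, in which case it is dense in $V(L,j,\geq u)$. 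The only genuine work is the combinatorial bookkeeping in the second step; the appeal to generic determinantal theory is then immediate.
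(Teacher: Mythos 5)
Your proposal is correct and follows the paper's own route: the paper simply remarks that $d_i>0$ for a lex-segment forces $M(p)_j$ to be a matrix of distinct indeterminates, and then both assertions are classical facts about generic determinantal loci. You merely make explicit the two bookkeeping points the paper leaves implicit (that $w_j\cap v_j=\emptyset$ so $M(p)_j^*=M(p)_j$, and that each degree-$0$ position of $M(p)$ is occupied by a distinct parameter $p_{i_1 i_2}$), and both verifications are sound.
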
 

If $I$ is an ideal with the same Hilbert function of the lex-segment $L$ and $\beta_{0j}(I)=u$ then $\beta_{1,j}(L)-\beta_{0,j}(L)+u$ is exactly $\beta_{1,j}(I)$. Hence the formula for the codimension of $V(L,j,\geq u)$ can be writen as 
 $\beta_{1,j}(I)\beta_{0,j}(I)$. 
 
It follows that:

\begin{corollary} 
Let $L$ be a lex-segment ideal and $I$ a homogeneous ideal with the Hilbert function of $L$. Set $\beta=\{\beta_{0,j}(I)\}$. Then the variety $V(L,\geq \beta)$ is irreducible, it is the closure of $V(L, \beta)$ and it has codimension $\sum_j \beta_{1,j}(I)\beta_{0,j}(I)$. 
\end{corollary}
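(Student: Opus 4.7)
The plan is to reduce to a single-degree analysis via Lemma \ref{trasva} and then apply the preceding corollary degree by degree. Set $\beta_j = \beta_{0,j}(I)$, so that by definition $V(L,\geq\beta)=\bigcap_j V(L,j,\geq\beta_j)$ and $V(L,\beta)=\bigcap_j V(L,j,\beta_j)$. Lemma \ref{trasva} says this intersection is transversal, the reason being that for different $j$ the determinantal varieties $V(L,j,\geq\beta_j)$ are cut out by conditions on disjoint subsets of the coordinate variables on $\AA^n$. Consequently, codimensions add and irreducibility of the intersection is equivalent to irreducibility of each factor.

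To invoke the preceding corollary on each factor I must check the hypothesis $\beta_{0,j}(L)-\beta_{1,j}(L)\le \beta_j\le \beta_{0,j}(L)$. The lower bound is immediate: since $I$ and $L$ share the same Hilbert function and both have projective dimension at most one (Hilbert--Burch in two variables), the difference $\beta_{0,j}-\beta_{1,j}$ is a Hilbert-function invariant, so $\beta_{0,j}(L)-\beta_{1,j}(L)=\beta_{0,j}(I)-\beta_{1,j}(I)\le \beta_{0,j}(I)=\beta_j$. The upper bound $\beta_{0,j}(I)\le \beta_{0,j}(L)$ is the extremality of lex-segments. In two variables this is especially clean: every Borel-fixed monomial ideal of $k[x,y]$ is a lex-segment, so the generic initial ideal of $I$ is necessarily $L$, and Betti numbers do not decrease when passing from $I$ to its generic initial ideal.

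With the range condition in hand, the preceding corollary directly gives for each $j$ that $V(L,j,\geq\beta_j)$ is irreducible, coincides with the closure of $V(L,j,\beta_j)$, and has codimension $(\beta_{1,j}(L)-\beta_{0,j}(L)+\beta_j)\beta_j = \beta_{1,j}(I)\beta_{0,j}(I)$, the last equality using the Euler identity just recalled. Summing over $j$ and applying Lemma \ref{trasva} yields the asserted codimension $\sum_j \beta_{1,j}(I)\beta_{0,j}(I)$ and the irreducibility of $V(L,\geq\beta)$.

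For the closure statement, the disjointness of the variable sets realises $\AA^n$ as a product of coordinate subspaces in which $V(L,\geq\beta)$ decomposes as a product of the $V(L,j,\geq\beta_j)$ and $V(L,\beta)$ as the corresponding product of the open dense subsets $V(L,j,\beta_j)$. Since the closure of a product of irreducible varieties is the product of their closures, $V(L,\geq\beta)=\overline{V(L,\beta)}$. The only ingredient that goes beyond the two cited results is the upper bound $\beta_{0,j}(I)\le \beta_{0,j}(L)$, and this is where I would focus the rigorous write-up; once it is granted, everything else is bookkeeping across the different degrees.
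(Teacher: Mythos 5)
Your proof follows the same route as the paper: reduce to a single degree via Lemma \ref{trasva}, apply the preceding corollary, and rewrite the codimension $(\beta_{1,j}(L)-\beta_{0,j}(L)+\beta_j)\beta_j$ as $\beta_{1,j}(I)\beta_{0,j}(I)$ using the Euler-characteristic identity that $\beta_{0,j}-\beta_{1,j}$ depends only on the Hilbert function. The only material you add is an explicit verification of the non-emptiness range $\beta_{0,j}(L)-\beta_{1,j}(L)\le\beta_{0,j}(I)\le\beta_{0,j}(L)$ (lower bound from $\beta_{1,j}(I)\ge 0$, upper bound from lex-segment extremality), a detail the paper leaves implicit.
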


We conclude the paper with an example. 

\begin{example} 
Let $L=(x^8, x^7y, x^6y^2, x^5y^4, x^4y^5, x^3y^6, x^2y^7, xy^9, y^{10})$. Then $V(L)$ is $\AA^{22}$, the parametrization given via the matrix $M(p)$ 
 
 $$ 
 \begin{array}{cccccccccccc}
 & \vline & 9 & 9 & 10 & 10 & 10 & 10 & 11 & 11 \\
\hline 
8& \vline & y & 0 & 0 & 0 & 0 & 0 & 0 & 0 \\ 
8&\vline& -x & y & 0 & 0 & 0 & 0 & 0 & 0 \\ 
8&\vline& 0 & -x & y^2 & 0 & 0 & 0 & 0 & 0 \\ 
9&\vline& p_1 & p_5 & -x + yp_9& y & 0 & 0 & 0 & 0 \\ 
9&\vline& p_2 & p_6 & yp_{10}& -x & y & 0 & 0 & 0 \\ 
9&\vline& p_3 & p_7 & yp_{11} & 0 & -x & y & 0 & 0 \\ 
9&\vline& p_4 & p_8 & yp_{12}& 0 & 0 & -x & y^2 & 0 \\ 
10&\vline& 0 & 0 & p_{13} & p_{15} & p_{17} & p_{19} & -x + yp_{21} & y \\ 
10&\vline& 0 & 0 & p_{14} & p_{16} & p_{18} & p_{20} & yp_{22} & -x
\end{array}
$$

The matrices whose ranks describe the Betti strata are: 

$$
M(p)_9= \left(
 \begin{array}{cc} 
 p_1 & p_5\\
 p_2 & p_6 \\
 p_3 & p_7\\
 p_4 & p_8 
 \end{array}\right)
 \qquad \mbox{ and} \qquad 
M(p)_{10} = \left(
 \begin{array}{cccc} 
 p_{13 } &p_{15} &p_{17} &p_{19 }\\ 
 p_{14} &p_{16} &p_{18} &p_{20} 
 \end{array}\right)
$$
For instance, with $\beta=(\beta_j)$ defined by $\beta_9=3, \beta_{10}=1$ and $\beta_j=\beta_j(L)$ for $j\neq 9,10$ the Betti strata 
$V(L,\geq \beta)$ is describe by $\rank M(p)_9\leq 1$ and $\rank M(p)_{10}\leq 1$. \end{example}


\begin{thebibliography}{9999}

\bibitem[AS]{AS}  K.Altmann, B.Sturmfels,  
{ \em The graph of monomial ideals}. 
J. Pure Appl. Algebra 201 (2005), no. 1-3, 250--263. 

\bibitem[BB1]{BB1} A.Bialynicki-Birula, {\em Some Theorems on Actions of Algebraic Groups}. Annals of Mathematics Vol. 98, No. 3 (Nov., 1973), pp. 480-497. 

\bibitem[BB2]{BB2} A.Bialynicki-Birula, {\em Some properties of the decompositions of algebraic varieties determined by actions of a torus} . Bulletin de l'Acad\'emie Polonaise des Sciences, S\'erie des sciences math. astr. et phys. 24(9), (1976) 667--674. 

\bibitem[Br]{Br} J.Brian\c{c}on, {\em Description de $Hilb^n \CC \{x, y \}$}. Invent. Math. 41 (1977) 45--89. 


\bibitem[BH]{BH} W.Bruns, J.Herzog, {\em Cohen-Macaulay rings}. Cambridge Studies in Advanced Mathematics, 39, Cambridge University Press 1993.


\bibitem[Co]{Co} CoCoATeam, {\em CoCoA: a system for doing Computations in Commutative Algebra}. Available at
 http://cocoa.dima.unige.it 
 
 
\bibitem[CRV]{CRV} A.Conca, M.E.Rossi, G.Valla, {\em Gršbner flags and Gorenstein algebras}. Compositio Math. 129 (2001), no. 1, 95--121.


\bibitem[E]{E} D.Eisenbud, {\em Commutative algebra. With a view toward algebraic geometry}. Graduate Texts in Mathematics, 150. Springer-Verlag, New York, 1995.
 
\bibitem[ES1]{ES1} G.Ellingsrud, S.A.Str\o mme, {\em On the homology of the Hilbert scheme of points in the plane}. Invent. Math. 87, (1987) 343--352. 

\bibitem[ES2]{ES2} G.Ellingsrud, S.A.Str\o mme, . {\em On a cell decomposition of the Hilbert scheme of points in the plane}. Invent. Math. 91, (1988). 365--370. 
 
 \bibitem[Go1]{Go1} L.G\"ottsche, {\em Betti-numbers of the Hilbert scheme of points on a smooth projective surface}. Math. Ann. 286 (1990). 193--207. 
 
 \bibitem[Go2]{Go2} L.G\"ottsche, {\em Betti-numbers for the Hilbert function strata of the punctual Hilbert scheme in two variables}. Manuscripta Math. 66 (1990) 253--259. 
 
 \bibitem[Gt]{Gt} G.Gotzmann, {\em A stratification of the Hilbert scheme of points in the projective plane}. Math. Zeitschrift 199(4) (1988). 539--547. 
 
 \bibitem[I1]{I1} A.Iarrobino, {\em Punctual Hilbert schemes.} Mem. Amer. Math. Soc. 10 (188) (1977).

 \bibitem[I2]{I2} A.Iarrobino, {\em Betti strata of height two ideals.} Journal of Algebra 285 (2005). 835-855. 
 
 
\bibitem[IY]{IY} A.Iarrobino, J.Yameogo, {\em The family $G\sb T$ of graded Artinian quotients of $k[x,y]$ of given Hilbert function}, Special issue in honor of Steven L. Kleiman. Comm. Algebra 31 (2003), no. 8, 3863--3916. 

\bibitem[KR]{KR} M.Kreuzer, L.Robbiano,{\em Computational commutative algebra 1}. Springer-Verlag, Berlin, 2000.

\bibitem[MS]{MS} E.Miller, B.Sturmfels,{\em Combinatorial commutative algebra} . Graduate Texts in Mathematics, 227. Springer-Verlag, New York, 2005.



\end{thebibliography}
\end{document}